 \newtheorem{thm}{Theorem}[section]
 \newtheorem{prop}[thm]{Proposition}
 \newtheorem{lem}[thm]{Lemma}
 \newtheorem{cor}[thm]{Corollary}
\theoremstyle{definition}
 \newtheorem{exm}[thm]{Example}
 \newtheorem{dfn}[thm]{Definition}
 \theoremstyle{definition}
\theoremstyle{remark}
 \newtheorem{rem}[thm]{Remark}
 \numberwithin{equation}{section}
\theoremstyle{definition}
\theoremstyle{remark}
 \numberwithin{equation}{section}
\renewcommand{\le}{\leqslant}
\renewcommand{\setminus}{\smallsetminus}
\newcommand{\bbC}{\mathbb{C}}
\newcommand{\bbN}{\mathbb{N}}
\newcommand{\bbQ}{\mathbb{Q}}
\newcommand{\bbZ}{\mathbb{Z}}   
\renewcommand{\and}{\quad \mbox{and} \quad}  
\renewcommand{\le}{\leqslant}
\renewcommand{\setminus}{\smallsetminus}
\title{Invariant formula of the determinant of a Heisenberg representation}
\subjclass[2010]{20G05; 22E50\\Keywords: Transfer map, Heisenberg representations, Determinant}
\author[Biswas]{\bfseries Sazzad Ali Biswas}
\address{
Chennai Mathematical Institute\\ 
H1, Sipcot It Park, Siruseri  \\ 
Kelambakkam, 603103\\
India}
\email{sabiswas@cmi.ac.in, sazzad.jumath@gmail.com}
\thanks{The  author is partially supported by IMU-Berlin Einstein Foundation, Berlin, Germany and CSIR, Delhi, India }
\begin{document}

\vspace{18mm}
\setcounter{page}{1}
\thispagestyle{empty}

\begin{abstract}
In this paper we give an explicit formula of the determinant of a Heisenberg representation $\rho$ of a finite group
$G$. Heisenberg representations are induced by
1-dimensional characters in multiple ways, but our formula will be independent of any
particular choice of induction.
\end{abstract}

\maketitle

\section{\textbf{Introduction}}

Let $G$ be a finite group and $\rho$ be a Heisenberg representation of $G$, that is an irreducible
representation of the two-step nilpotent factor group $G/C^3(G)$. (cf. Subsection 2.1.) Let $Z$ be the
scalar group for $\rho$ and $H$ be any maximal isotropic subgroup of $G$ for $\rho$ (in the sense of
Remark \ref{Remark 2.2}(i) below). The central character $\chi_\rho:Z\to\bbC^\times$ of $\rho$ is then extendible to a
linear character $\chi_H$ of $H$ and for any choice of $(H,\chi_H)$ we have:
$$\rho\cong\rm{Ind}_{H}^{G}(\chi_H),\quad\det(\rho)=\det(\rm{Ind}_{H}^{G}(\chi_H)).$$
Our aim is to establish a formula of $\det(\rho)$ which is independent of the choices made.

We start with the following formula obtained by Gallagher's result (cf. Theorem \ref{Theorem Gall}):
$$\det(\rm{Ind}_{H}^{G}(\chi_H))(g)=\Delta_{H}^{G}(g)\cdot\chi_H(T_{G/H}(g))\quad\text{for all $g\in G$},$$
where $\Delta_{H}^{G}$ is the determinant of $\rm{Ind}_H^G(1_H)$. In fact $\Delta_{H}^{G}(s)$ is the 
sign $\varepsilon_{G/H}(s)$ of the
permutation of $G/H$ defined by left multiplication with $s$, and $T_{G/H}$ is the transfer map
from $G$ to $H$.

Since $\overline{G}=G/\rm{Ker}(\rho)$ is a two-step nilpotent group (cf. Remark \ref{Remark 2.2}(ii)) 
it will be enough
to exploit Gallagher's formula in the case when $G$ is two-step nilpotent. Thus in Lemma \ref{Lemma 3.1}
we compute $T_{G/H}$, when $H\supseteq [G,G]$ is an abelian subgroup of finite index of a two-step
nilpotent group $G$. Based on that, as our main result we will prove the following theorem
in section 4:

\begin{thm}\label{Theorem 1.1}
 Let $\rho=(Z,\chi_\rho)$ be a Heisenberg representation of $G$ of dimension $d$ and put
 \begin{itemize}
  \item $\varepsilon_\rho(g)=-1$ if $G/G^2Z$ is Klein's 4-group and $g\not\in G^2Z$,
  \item $\varepsilon_\rho(g)=1$ in all other cases (if $G/G^2Z$ is not Klein's 4-group or if $g\in G^2Z$).
 \end{itemize}
 Then we obtain
 \begin{equation}\label{eqn 1.1}
  \det(\rho)(g)=\varepsilon_\rho(g)\cdot\chi_\rho(g^d).
 \end{equation}
\end{thm}

We remark that all terms in equation (\ref{eqn 1.1}) depend only on $g\pmod{[G,G]}$, and hence can
be interpreted as functions on $G/[G,G]$. But only $\det(\rho)$ is always a character of $G/[G,G]$,
whereas the factors on the right need not be characters.

In section 5, we also give an arithmetic version of the above theorem (see Proposition \ref{Proposition 5.1}).



\section{\textbf{Notations and preliminaries}}

\subsection{Heisenberg Representations}

Let $\rho$ be an irreducible representation of a (pro-)finite group $G$. Then $\rho$ is called a \textbf{Heisenberg 
representation} if it represents commutators by 
scalar operators. Therefore higher commutators are represented by $1$.
Let $Z_\rho$ be the \textbf{scalar} group of $\rho$, i.e., $Z_\rho\subseteq G$ and $\rho(z)=\text{scalar matrix}$
for every $z\in Z_\rho$. We can see that the linear characters of $G$ are Heisenberg representations as the degenerate special case.
If $C^1G=G$, $C^{i+1}G=[C^iG,G]$ denotes the 
descending central series of $G$, the Heisenberg
property means $C^3G\subset \mathrm{Ker}(\rho)$, 
 and therefore $\rho$ determines a character $X$ on the alternating square of $A:=G/C^2G$ such that:
\begin{equation}
\rho([\hat{a_1},\hat{a_2}])= X(a_1,a_2)\cdot E
\end{equation}
for $a_1,a_2 \in A $ with lifts $\hat{a_1},\hat{a_2}\in G$. The equivalence class of $\rho$ is determined by the
projective kernel 
$Z_{\rho}$ which has the property that $Z_{\rho}/C^2G$ is the \textbf{radical}
of $X$ and by the character $\chi_{\rho}$ of $Z_{\rho}$ such 
that $\rho(g)=\chi_{\rho}(g)\cdot E$ for all $g\in Z_{\rho}$ and $E$ being the unit operator.
 Here $\chi_{\rho}$ is a $G$-\textbf{invariant} character of $Z_{\rho}$ 
 which we call the central character of $\rho$.

It is known that the Heisenberg representations $\rho$ are fully characterized by 
the corresponding pairs $(Z_{\rho},\chi_{\rho})$.
\begin{prop}[\textbf{\cite{Z3}, Proposition 4.2}]\label{Proposition 3.1}
The map $\rho\mapsto(Z_\rho,\chi_\rho)$ is a bijection between equivalence 
classes of Heisenberg representations of $G$ and the pairs $(Z_\rho,\chi_\rho)$ such that 
\begin{enumerate}
 \item[(a)] $Z_\rho\subseteq G$ is a coabelian normal subgroup,
 \item[(b)] $\chi_\rho$ is a $G$-invariant character of $Z_\rho$,
 \item[(c)] $X(\hat{g_1},\hat{g_2}):=\chi_\rho(g_1g_2g_1^{-1}g_2^{-1})$ is a nondegenerate 
 \textbf{alternating character} on $G/Z_\rho$, where $\hat{g_1},\hat{g_2}\in G/Z_{\rho}$ and their 
 corresponding lifts $g_1,g_2\in G$.
\end{enumerate}
\end{prop}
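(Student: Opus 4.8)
The plan is to construct the two maps of the claimed bijection and to verify that they are mutually inverse on equivalence classes, the existence half resting on a Stone--von Neumann type uniqueness. For the forward direction $\rho\mapsto(Z_\rho,\chi_\rho)$ I would check properties (a)--(c). Normality of $Z_\rho$ and $G$-invariance of $\chi_\rho$ are simultaneous: a scalar operator is central in $\rho(G)$, so $\rho(hgh^{-1})=\rho(g)$ whenever $\rho(g)$ is scalar. Coabelianness holds because $\rho$ sends every commutator to a scalar, so $[G,G]\subseteq Z_\rho$ and $G/Z_\rho$ is abelian, giving (a) and (b). For (c), $X(\hat{g_1},\hat{g_2})=\chi_\rho([g_1,g_2])$ is well defined on $G/Z_\rho$ since a scalar $\rho(z)$ forces $[z,g]\in\mathrm{Ker}(\rho)$, so $Z_\rho$ lies in the radical; $X$ is alternating because $X(\hat g,\hat g)=\chi_\rho(1)=1$, and bi-multiplicative by the commutator identities together with $C^3G\subseteq\mathrm{Ker}(\rho)$. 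The decisive point is nondegeneracy: if $X(\hat g,\hat h)=1$ for all $h$, then $[g,h]\in\mathrm{Ker}(\rho)$ for every $h$, so $\rho(g)$ commutes with all of $\rho(G)$ and is scalar by Schur's lemma; hence $g\in Z_\rho$ and the radical of $X$ on $G/Z_\rho$ is trivial.

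For the reverse direction I would use the symplectic structure of a nondegenerate alternating character. Given $(Z,\chi)$ satisfying (a)--(c), the form $X$ on the finite abelian group $A=G/Z$ is nondegenerate, so $|A|=d^2$ is a square and $A$ has a maximal isotropic (Lagrangian) subgroup $M/Z$ with $[M:Z]=[G:M]=d$ that equals its own orthogonal complement; since $A$ is abelian, $M$ is normal in $G$. I would then extend the $G$-invariant character $\chi$ of $Z$ to a linear character $\chi_M$ of $M$ and set $\rho:=\mathrm{Ind}_M^G\chi_M$. Normality of $M$ lets me apply Mackey's irreducibility criterion, which reduces to showing $\chi_M^g\neq\chi_M$ for all $g\notin M$; a short computation gives $\chi_M^g(m)=\chi_M(m)\cdot X(\hat m,\hat g)$, and because $M/Z$ is self-perpendicular, $X(\hat m,\hat g)=1$ for all $m\in M$ forces $g\in M$, so $\rho$ is irreducible. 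One then verifies that $\rho|_Z=\chi\cdot E$ (by $G$-invariance of $\chi$), that commutators lie in $Z$ and so act as scalars (so $\rho$ is Heisenberg), and that its scalar group is exactly $Z$ because the radical of $X$ is trivial; thus $(Z_\rho,\chi_\rho)=(Z,\chi)$.

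It remains to check that the two assignments are mutually inverse and that the reverse one descends to equivalence classes. Starting from a pair and returning a pair reproduces it by the scalar-group computation above, and starting from $\rho$ recovers $\rho$ up to equivalence once uniqueness is known. I expect the two delicate points both to lie in the reverse map. First, the extension $\chi_M$ exists because the obstruction in $H^2(M/Z,\mathbb{C}^\times)$ vanishes: its commutator part is $X|_{M/Z}$, trivial as $M/Z$ is isotropic, while its symmetric part lies in $\mathrm{Ext}^1(M/Z,\mathbb{C}^\times)=0$ since $\mathbb{C}^\times$ is divisible. Second, and this is the main obstacle, I must show $\mathrm{Ind}_M^G\chi_M$ is independent up to equivalence of the Lagrangian $M$ and of the extension $\chi_M$, the finite-group analogue of the Stone--von Neumann theorem. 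I would argue by character comparison: the twisted group algebra of $A=G/Z$ attached to the nondegenerate cocycle $X$ is a full matrix algebra $M_d(\mathbb{C})$ with a unique irreducible module, so any two such inductions, both irreducible with $\rho|_Z=\chi\cdot E$, have equal character and are equivalent. With this uniqueness, $\rho\mapsto(Z_\rho,\chi_\rho)$ is a bijection onto the pairs satisfying (a)--(c).
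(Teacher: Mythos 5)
Your proof is correct, but there is no internal proof to compare it with: the paper states this proposition as an imported result, citing Zink (\cite{Z3}, Proposition 4.2), and proves nothing itself --- the only trace of the underlying argument in the paper is the pair of consequences recorded immediately after the statement, namely $\sqrt{[G:Z_\rho]}\cdot\rho=\mathrm{Ind}_{Z_\rho}^{G}\chi_\rho$ and $\rho=\mathrm{Ind}_{H}^{G}\chi_H$ with $[G:H]=[H:Z_\rho]=\dim(\rho)$ for a maximal isotropic $H$. Your argument reconstructs exactly the machinery behind those two displayed facts: the Lagrangian $M$ you extract from the nondegenerate alternating form is the paper's maximal isotropic subgroup $H$ of index $d$; your Mackey computation $\chi_M^{g}(m)=\chi_M(m)\cdot X(\hat m,\hat g)$, with $M/Z$ self-perpendicular, is the standard proof that $\mathrm{Ind}_{H}^{G}\chi_H$ is irreducible; and your twisted-group-algebra uniqueness step is the finite Stone--von Neumann theorem that underlies the first identity, since $\mathrm{Ind}_{Z}^{G}\chi$ has endomorphism algebra the twisted algebra of $G/Z$, which is a full matrix algebra $M_d(\mathbb{C})$ precisely when $X$ is nondegenerate, forcing $\mathrm{Ind}_{Z}^{G}\chi=d\cdot\rho$ with $\rho$ the unique irreducible lying over $\chi$; this simultaneously gives surjectivity and the injectivity of $\rho\mapsto(Z_\rho,\chi_\rho)$. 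Two small points of hygiene, neither a gap: $X$ itself is not a $2$-cocycle but the antisymmetrization of one, so the twisted algebra must be formed from a cocycle $c$ with $c(a,b)c(b,a)^{-1}=X(a,b)$, simplicity being equivalent to nondegeneracy of $X$; and in the extension step you should say explicitly that $Z/\mathrm{Ker}(\chi)$ is central in $M/\mathrm{Ker}(\chi)$ --- this follows from the $G$-invariance of $\chi$, since $mzm^{-1}z^{-1}\in Z$ and $\chi([m,z])=1$ --- which is what places the obstruction in $H^2(M/Z,\mathbb{C}^\times)$; combined with the triviality of $X$ on the isotropic $M/Z$ and $\mathrm{Ext}^1(M/Z,\mathbb{C}^\times)=0$, it makes $M/\mathrm{Ker}(\chi)$ abelian, so $\chi$ extends by divisibility of $\mathbb{C}^\times$. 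With these glosses your proof is complete and is the standard one for the quoted result.
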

\begin{rem}\label{Remark 2.2}
{\bf (i).}
For pairs $(Z_\rho,\chi_\rho)$ with the properties $(a)-(c)$, the corresponding Heisenberg representation $\rho$ is 
determined 
by the identity:
\begin{equation}\label{eqn 3.2}
 \sqrt{[G:Z_\rho]}\cdot\rho=\mathrm{Ind}_{Z_\rho}^{G}\chi_\rho.
\end{equation}
Furthermore, if $H$ is a maximal isotropic subgroup of $G$ for $\rho$ ({\bf which is to say that
$H/Z_\rho\subset G/Z_\rho$ is maximal isotropic with respect to $X$}) and if $\chi_H\in\widehat{H}$ is an extension
of $\chi_\rho$, then we have (cf. \cite{Z3}, p.193, Proposition 5.3, more properly: \cite{EWZ}, p.271/2, Lemma 3
and Proposition 3):
\begin{center}
 $\rho=\rm{Ind}_{H}^{G}\chi_H$ and $[G:H]=[H:Z_\rho]=\sqrt{[G:Z_\rho]}=\rm{dim}(\rho)$.
\end{center}
{\bf (ii).}
Property (a) means: $Z_\rho \supseteq [G,G],$ which implies $[G,Z_\rho]\supseteq [G,[G,G]].$\\
Property (b) means: $[G,Z_\rho] \subseteq \rm{Ker}(\chi_\rho)$, which implies 
$[G,Z_\rho]\subseteq [G,G]\cap \rm{Ker}(\chi_\rho).$\\
Because $\rho(g)=\chi_\rho(g)\cdot E$ for $g\in Z_\rho$ and because of (c) we obtain that
$$ \chi_\rho (g_1g_2g_1^{-1}g_2^{-1})\cdot E = [\rho(g_1),\rho(g_2)]$$
is nondegenerate on $G/Z_\rho$ hence $\rm{Ker}(\rho)\subseteq Z_\rho$ and therefore 
$\rm{Ker}(\rho)=\rm{Ker}(\chi_\rho)\supseteq [G,Z_\rho].$\\
{\bf (iii).} Actually we see now that $\rho$ is a representation of the two-step nilpotent 
group $G/[G,Z_\rho]$ which has  
$Z_\rho/[G,Z_\rho]$ as its center. \\
{\bf (iv).} If we go further to $\overline{G}:=G/\rm{Ker}(\rho)$ then in $\overline{G}$ the relation 
$\chi_\rho(g_1g_2g_1^{-1}g_2^{-1})=1$ implies already $g_1g_2g_1^{-1}g_2^{-1}=1$ because 
$\rm{Ker}(\rho)=\rm{Ker}(\chi_\rho)$. Therefore in $\overline{G}$ isotropic is the same as commuting elements,
which implies that $\chi_\rho$ extends to $\chi_H$ if $H$ is maximal isotropic.
\end{rem}

\subsection{Transfer map}

Let $H$ be a subgroup of a finite group $G$. Let $\{t_1,t_2,\cdots,t_n\}$ be a left transversal for $H$ in $G$. 
If $g\in G$ then 
for all $i=1,2,\cdots,n$ we obtain, 
\begin{equation}
 g t_i\in t_{g(i)} H,
\end{equation}
where the map $i\mapsto g(i)$ is a well-defined permutation of the set $\{1,2,\cdots,n\}$. Assume that $f:H\to A$ is
a homomorphism from $H$ to an abelian
group $A$. Then \textbf{transfer} of $f$, written $T_{f}$, is a mapping 
\begin{center}
 $T_f:G\to A$ \hspace{.5cm}given by \\
 $T_{f}(g)=\prod_{i=1}^{n}f(t_{g(i)}^{-1}g t_{i})$ \hspace{.4cm}for all $g\in G$.
\end{center}
Since $A$ is abelian, the order of the factors in the product is irrelevant. Now we take $f$ the canonical homomorphism, i.e.,
\begin{center}
 $f:H\to H/[H,H]$, where $[H,H]$ is the commutator subgroup of $H$.
\end{center}
And we denote $T_{f}=T_{G/H}$. Thus by definition of transfer map $T_{G/H}:G\to H/[H,H]$, we have  
\begin{equation}
 T_{G/H}(g)=\prod_{i=1}^{n}f(t_{g(i)}^{-1}g t_{i})=\prod_{i=1}^{n}t_{g(i)}^{-1}g t_{i}[H,H],
\end{equation}
for all $g\in G$.

Moreover, if $H$ is any subgroup
of finite index in $G$, then (cf. \cite{AT}, Chapter 13, p. 183)  
\begin{equation}\label{eqn 1.3}
 T_{G/gHg^{-1}}(g')=g T_{G/H}(g') g^{-1}, 
\end{equation}
for all $g,g'\in G$.
Now let $H$ be an abelian normal subgroup of $G$. Let $H^{G/H}$ be the set 
consisting the elements which are invariant under conjugation. So it is clear that these elements
are central elements and $H^{G/H}\subseteq Z(G)$, the center of $G$. When $H$ is abelian normal subgroup of $G$,
from equation (\ref{eqn 1.3}) we can conclude that (cf. \cite{AT}, Chapter 13, p. 183) that
\begin{equation}\label{eqn 2.6}
 \mathrm{Im}(T_{G/H})\subseteq H^{G/H}\subseteq Z(G). 
\end{equation}



 

 To compute the determinant of an induced representation of a finite group, we need the following theorem.
 \begin{thm}[Gallagher, \cite{GK}, Theorem $30.1.6$]\label{Theorem Gall}
 Let $G$ be a finite group and $H$ a subgroup of $G$. Let $\rho$ be a representation of $H$ and denote 
 $\Delta_{H}^{G}=\mathrm{det}(\mathrm{Ind}_{H}^{G}1_H)$. Then 
  \begin{equation}
   \mathrm{det}(\mathrm{Ind}_{H}^{G}\rho)(g)=(\Delta_{H}^{G})^{\mathrm{dim}(\rho)}(g)\cdot
   (\mathrm{det}(\rho)\circ T_{G/H})(g), \quad\text{for all $g\in G$}.
  \end{equation}


\end{thm}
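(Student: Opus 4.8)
The plan is to compute $\det(\mathrm{Ind}_H^G\rho)(g)$ directly from the block-monomial matrix form of the induced representation, and then split this determinant into a "permutation part" and a "diagonal part," which will reproduce the two factors $(\Delta_H^G)^{\dim\rho}$ and $\det(\rho)\circ T_{G/H}$ respectively. Fix a left transversal $\{t_1,\dots,t_n\}$ for $H$ in $G$ with $n=[G:H]$, and let $m=\dim\rho$ with representation space $V$. Recall that $\mathrm{Ind}_H^G\rho$ acts on $V^{\oplus n}$, and for $g\in G$ write $g t_i=t_{g(i)} h_i$ with $h_i:=t_{g(i)}^{-1} g t_i\in H$, where $i\mapsto g(i)$ is the permutation $\sigma_g$ of $\{1,\dots,n\}$ furnished by the left-multiplication action on cosets. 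In the chosen basis the operator $\mathrm{Ind}_H^G\rho(g)$ is the $n\times n$ array of $m\times m$ blocks whose $(j,i)$ block equals $\rho(h_i)$ when $j=\sigma_g(i)$ and is zero otherwise; call this matrix $M$.

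First I would factor $M=P_{\sigma_g}\cdot D$, where $P_{\sigma_g}$ is the block-permutation matrix carrying the $i$-th block to the $\sigma_g(i)$-th block with identity block $I_m$ in position $(\sigma_g(i),i)$, and $D$ is the block-diagonal matrix with $i$-th diagonal block $\rho(h_i)$. A direct block multiplication confirms this factorization, whence $\det M=\det(P_{\sigma_g})\cdot\det(D)$.

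For the diagonal part, $\det D=\prod_{i=1}^{n}\det(\rho(h_i))=\prod_{i=1}^{n}(\det\rho)(t_{\sigma_g(i)}^{-1} g t_i)$. Since $\det\rho\colon H\to\bbC^\times$ is a linear character it annihilates $[H,H]$ and thus descends to a homomorphism $H/[H,H]\to\bbC^\times$; consequently the product may be pulled inside $\det\rho$ and compared with the definition of the transfer, giving $\det D=(\det\rho)\bigl(\prod_i h_i\,[H,H]\bigr)=\bigl(\det(\rho)\circ T_{G/H}\bigr)(g)$. For the permutation part, $P_{\sigma_g}$ is the "$m$-fold blow-up" of the ordinary permutation matrix of $\sigma_g$: replacing each $1$ by $I_m$ turns each transposition of $\sigma_g$ into a product of $m$ disjoint transpositions of coordinates, so $\det(P_{\sigma_g})=\mathrm{sgn}(\sigma_g)^{m}$.

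It then remains to identify $\mathrm{sgn}(\sigma_g)$ with $\Delta_H^G(g)$. But $\mathrm{Ind}_H^G 1_H$ is exactly the permutation representation of $G$ on the coset space $G/H$, whose matrix at $g$ is the ordinary permutation matrix of $\sigma_g$; hence $\Delta_H^G(g)=\det(\mathrm{Ind}_H^G 1_H)(g)=\mathrm{sgn}(\sigma_g)$, and therefore $\det(P_{\sigma_g})=(\Delta_H^G)^{m}(g)=(\Delta_H^G)^{\dim\rho}(g)$. Multiplying the two parts yields the asserted identity. The step I expect to require the most care is the block-matrix bookkeeping: verifying the factorization $M=P_{\sigma_g}D$ with the blocks placed in the correct positions, and correctly computing the sign of the blow-up permutation as $\mathrm{sgn}(\sigma_g)^m$ rather than $\mathrm{sgn}(\sigma_g)$; the remaining identifications with the transfer and with $\Delta_H^G$ are then immediate from the definitions recalled above.
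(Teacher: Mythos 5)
Your proof is correct. The paper does not actually prove this theorem---it cites Karpilovsky and merely records the block-monomial structure of $\mathrm{Ind}_{H}^{G}\rho$ after the statement---and your factorization $M=P_{\sigma_g}D$, with $\det(P_{\sigma_g})=\mathrm{sgn}(\sigma_g)^{\dim\rho}=(\Delta_{H}^{G})^{\dim\rho}(g)$ and $\det(D)=(\det\rho)(T_{G/H}(g))$, is exactly the standard argument underlying that block-monomial description, so it matches the paper's (sketched) approach.
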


Let $T$ be a left transversal for $H$ in $G$. Here $\mathrm{Ind}_{H}^{G}\rho$
is a block monomial representation (cf. \cite{GK}, p. 956) with block positions indexed by pairs $(t,s)\in T\times T$.
For $g\in G$, the $(t,s)$-block of $\mathrm{Ind}_{H}^{G}\rho$ is zero unless $gt\in sH$, i.e., $s^{-1}gt\in H$ and in 
which case
the block equal to $\rho(s^{-1} g t)$. 
Then we can write for $g\in G$
\begin{equation}
 T_{G/H}(g)=\prod_{t\in T}s^{-1}gt[H,H].
\end{equation}
Thus for all $g\in G$
\begin{align}
 \mathrm{det}(\mathrm{Ind}_{H}^{G}\rho)(g)\nonumber
&=(\Delta_{H}^{G})^{\mathrm{dim}(\rho)}(g)\cdot\mathrm{det}(\rho)\circ T_{G/H}(g)\\\nonumber
&=(\Delta_{H}^{G})^{\mathrm{dim}(\rho)}(g)\cdot\mathrm{det}(\rho)(\prod_{t\in T}s^{-1}gt[H,H])\\
&=(\Delta_{H}^{G})^{\mathrm{dim}(\rho)}(g)\cdot\prod_{t\in T}\mathrm{det}(\rho)(s^{-1}gt [H,H]),\label{eqn 1.7}
\end{align}
where in each factor on the right, $s=s(t)$ is uniquely determined by $gt\in sH$.


%


\subsection{Some useful results from finite Group Theory}

Let $G$ be a finite abelian group and put $\alpha=\prod_{g\in G}g$. By the following theorem we can compute $\alpha$.

\begin{thm}[\cite{PC}, Theorem 6 (Miller)]\label{Theorem 2.4}
 Let $G$ be a finite abelian group and $\alpha=\prod_{g\in G}g$.
 \begin{enumerate}
  \item If $G$ has no element of order $2$, then $\alpha=e$.
  \item If $G$ has a unique element $t$ of order $2$, then $\alpha=t$.
  \item If $G$ has at least two elements of order $2$, then $\alpha=e$.
 \end{enumerate}

\end{thm}

Let $G$ be a two-step\footnote{Its derived subgroup, i.e., commutator subgroup
$[G,G]$ is contained in its center. 
In other worlds, $[G,[G,G]]=\{1\}$, i.e., any triple commutator gives identity.} nilpotent group. 
For $G$, we have the following 
lemma which follows from the properties of two-step nilpotent groups and Lemma 9 on p. 77 of \cite{AB}. 
\begin{lem}\label{Lemma 2.4}
 Let $G$ be a finite two-step-nilpotent group and $x,y\in G.$ Then:\\
{\bf (i)} $[x,y]:=xyx^{-1}y^{-1}$ is always in the center of $G.$\\
{\bf (ii)} $[x_1x_2,y]=[x_1,y][x_2,y].$\\
{\bf (iii)} $[x^n,y]= [x,y]^n$ for all $n\in \bbN.$\\
{\bf (iv)} $x^ny^n =(xy)^n\cdot [x,y]^{n(n-1)/2}$ for all $n\in \bbN,$\\
{\bf (v)} $x_1^n\cdots x_s^n = (x_1\cdots x_s)^n\cdot\prod_{1\le i< j\le s}
[x_i,x_j]^{n(n-1)/2}$ for all $n\in \bbN.$
\end{lem}


\begin{dfn}[\textbf{$2$-rank of a finite abelian group}]
Let $G$ be a finite abelian group. Then from elementary divisor theorem (cf. \cite{DF} p. 77, Theorem 3) we can write
\begin{equation}
 G\cong\bbZ_{m_1}\times\bbZ_{m_2}\times\cdots\times\bbZ_{m_s}
\end{equation}
where $m_1|m_2|\cdots|m_s$ and $\prod_{i=1}^{s}m_i=|G|$. We define the $2$-rank of $G$
\begin{center}
$\rm{rk}_2(G):=$the number of $m_i$-s which are even.
\end{center}
In other words,
a finite abelian group $G$ is the direct product of its p-Sylow groups $G(p)$, and  the 2-rank of $G$ is equal to 
the $\mathbb{F}_2$-dimension of the subgroup $G[2]\subseteq G(2)$ of elements of orders $\le 2$.
\end{dfn}

\begin{prop}\label{Proposition 212}
 Let $G$ be an abelian group of $\mathrm{rk}_2(G)=n$. Then $G$ has $2^n-1$ nontrivial elements of order $2$. 
\end{prop}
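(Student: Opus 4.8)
The plan is to reduce the counting of elements of order exactly $2$ to counting elements of order at most $2$, and then to compute the latter factor by factor using the elementary divisor decomposition. Concretely, I would first observe that a nontrivial element of order $2$ in $G$ is exactly an element of $G[2]$ other than the identity, so the quantity I want is $|G[2]| - 1$. Thus the proposition is equivalent to the claim $|G[2]| = 2^n$, where $n = \mathrm{rk}_2(G)$.

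To compute $|G[2]|$, I would invoke the elementary divisor decomposition from Theorem \ref{Theorem 22.4}, writing
\begin{equation*}
 G \cong \bbZ_{m_1} \times \cdots \times \bbZ_{m_s}, \qquad m_1 \mid \cdots \mid m_s,
\end{equation*}
together with the multiplicativity formula \eqref{eqn 444}, namely $|G[2]| = \prod_{i=1}^s |\bbZ_{m_i}[2]|$. So it remains to understand each cyclic factor. The key elementary fact is that in the cyclic group $\bbZ_m$ the equation $2x = 0$ has exactly $\gcd(2,m)$ solutions: precisely $2$ solutions (namely $0$ and $m/2$) when $m$ is even, and only the single solution $0$ when $m$ is odd. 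Hence $|\bbZ_{m_i}[2]| = 2$ if $m_i$ is even and $|\bbZ_{m_i}[2]| = 1$ if $m_i$ is odd.

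Substituting into the product, only the even $m_i$ contribute a factor of $2$, while the odd $m_i$ contribute a factor of $1$. By the definition of $\mathrm{rk}_2(G)$ as the number of even $m_i$, there are exactly $n$ factors equal to $2$, giving
\begin{equation*}
 |G[2]| = \prod_{i=1}^{s} |\bbZ_{m_i}[2]| = 2^{n}.
\end{equation*}
Subtracting the identity element then yields the $2^n - 1$ nontrivial elements of order $2$, as claimed.

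I do not anticipate a genuine obstacle here; the statement is elementary once the decomposition and the multiplicativity \eqref{eqn 444} are in hand. The only point requiring a little care is to state clearly that "nontrivial element of order $2$" means an element of order exactly $2$, so that the count is $|G[2]| - 1$ rather than $|G[2]|$; everything else is the standard solution count of $2x = 0$ in a cyclic group, applied factorwise.
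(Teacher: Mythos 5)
Your proof is correct and follows essentially the same route as the paper's: both use the elementary divisor decomposition together with the multiplicativity formula (\ref{eqn 444}), count the solutions of $2x=0$ in each cyclic factor ($2$ when $m_i$ is even, $1$ when $m_i$ is odd) to get $|G[2]|=2^n$, and subtract the identity. Your explicit remark that ``nontrivial elements of order $2$'' are exactly the non-identity elements of $G[2]$ makes the last step slightly more careful than the paper's, but the argument is the same.
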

\begin{proof}
 By the given condition, $\rm{rk}_2(G)=n$, i.e., the vector space $G[2]$ over $\mathbb{F}_2$ has $2^n$ elements
 and they are of order $2$.
 %
Hence we can conclude that when $G$ is abelian with $\mathrm{rk}_{2}(G)=n$, 
it has $2^n-1$ nontrivial elements of order $2$.
 
\end{proof}

We also need a structure theorem for finite abelian groups which come provided with an alternating character:

\begin{lem}[\cite{EWZ}, p. 270, Lemma 1(VI)]\label{Lemma 2.8}
 Let $G$ be a finite abelian group and assume the existence of an alternating bi-character 
 $X:G\times G\to\bbC^\times$  ( $X(g,g)=1$ for all $g\in G$, hence $1=X(g_1g_2,g_1g_2)=X(g_1,g_2)\cdot X(g_2,g_1)$) 
 which is nondegenerate. Then there will exist elements $t_1, t_1',\cdots,t_s,t_s'\in G$ 
such that 
 \begin{enumerate}
  \item 
   $G=<t_1>\times<t_1'>\times\cdots\times<t_s>\times<t_s'>$\\
  $ \cong\bbZ/m_1\times\bbZ/m_1\times\cdots\times\bbZ/m_s\times\bbZ/m_s$
  and $m_1|\cdots|m_s$.
 
\item For all $i=1,2,\cdots,s$ we have $X(t_i,t_i')=\zeta_{m_i}$ a primitive $m_i$-th root of unity.
  \item If we say $g_1\perp g_2$ if $X(g_1,g_2)=1$, then $(<t_i>\times<t_i'>)^\perp=\prod_{j\ne i}(<t_j>\times<t_j'>)$.
 \end{enumerate}

\end{lem}

\section{{\bf Computation of the transfer map for two-step nilpotent group}}

For two-step nilpotent group, we have the following lemma.
\begin{lem}\label{Lemma 3.1}
Assume that $G$ is a two-step nilpotent group and $H\supseteq [G,G]$ is an abelian subgroup of index 
$[G:H]=d.$ Then the transfer map $T_{G/H}:G/[G,G] \rightarrow H^{G/H}$ restricted to $H$ is given as
$$  T_{G/H}(h) = h^d\cdot [h,\alpha_{G/H}],\quad\text{for all}\quad h\in H,$$
where $\alpha_{G/H} = \prod_{\overline{g}\in G/H} \overline{g}  \in G/H,$ hence $[h,\alpha_{G/H}]$ is a central 
element of order $\le 2.$\\
As a consequence also $h^d$ is always a central element: $h^d \in Z(G).$
\end{lem}

\begin{proof} 
Let $T$ be a left transversal for $H$. When $h\in H$,  
 we have 
 \begin{center}
  $ht=t\cdot t^{-1}ht\in tH$,
 \end{center}
because $H\supseteq [G,G]$, hence $H$ is a normal subgroup of 
 $G$. Hence $s=t$, where $s=s(t)$ is a function of $t$ which is uniquely determined by $gt\in sH$, for some 
 $g\in G$. Therefore:
 \begin{align}
  T_{G/H}(h)\nonumber
  &=\prod_{t\in T}s^{-1}ht=\prod_{t\in T}t^{-1}ht=\prod_{t\in T}hh^{-1}t^{-1}ht=\prod_{t\in T}(h\cdot[h^{-1},t^{-1}])\\
  &=h^{d}\prod_{t\in T}[h^{-1},t^{-1}]=h^{d}[h^{-1},\prod_{t\in T}t^{-1}]=h^d\cdot[h,\alpha_{G/H}],\label{eqn 3.1}
 \end{align}
 where $\alpha_{G/H}=\prod_{t^{-1}\in T}t=\prod_{\bar{g}\in G/H}\bar{g}$.

Moreover the commutator $[h,\alpha_{G/H}]$ is well defined because $H$ is abelian and $[.,.]$ is bilinear.
Then $[h,\alpha_{G/H}]$ is of order $\le 2$ because, by the Miller's result (cf. Theorem \ref{Theorem 2.4}), 
$\alpha_{G/H}\in G/H$ is of order $\le 2$.
Finally $h^d\in Z(G)$ follows from $T_{G/H}(h),\;[h,\alpha_{G/H}]\;\in Z(G).$\\

\end{proof}

\section{\textbf{Proof of Theorem \ref{Theorem 1.1}}}


To prove Theorem \ref{Theorem 1.1}, we need to check the following properties of $\varepsilon_\rho$:
\begin{enumerate}
 \item $\varepsilon_\rho$ has values in $\{\pm 1\}$.
 \item $\varepsilon_\rho(gx)=\varepsilon_\rho(g)$ for all $x\in G^2Z$, hence $\varepsilon_\rho$ is a
 function on the factor group
 $G/G^2Z$, and in particular, $\varepsilon_\rho\equiv 1$ if $[G:Z]=d^2$ is odd.
 \item  The sign function $\varepsilon_\rho(g)$ is $\equiv 1$ unless $G/G^2Z$ is Klein's 4-group in which case we 
 have $\varepsilon_\rho(g)=-1$ if and only if $g\notin G^2Z.$


\end{enumerate}
We also need the following lemma.

\begin{lem}\label{Lemma 4.1}
 Let $\rho=(Z,\chi_\rho)$ be a Heisenberg representation of $G$ and put $X_\rho(g_1,g_2):=\chi_\rho([g_1,g_2])$.
 Then for every element $g\in G$, there exists a maximal isotropic subgroup $H$ for $X_\rho$ such that 
 $g\in H$.
\end{lem}
\begin{proof}

Let $g$ be a nontrivial element in $G$. Now we take a cyclic subgroup $H_0$ generated by $g$, i.e., $H_0=<g>$. Then 
$X_\rho(g,g)=1$ implies $H_0\subseteq H_{0}^{\perp}$. If $H_0$ is not maximal isotropic, then the inclusion is proper
and $H_0$ together
with some $h\in H_{0}^{\perp}\setminus H_0$ generates some larger isotropic subgroup $H_1\supset H_0$. Again we have 
$H_1\subseteq H_{1}^{\perp}$, and if $H_1$ is not maximal then the inclusion is proper, then again we proceed same
method and will 
have another isotropic subgroup and we continue this process step by step  come to maximal isotropic subgroup $H$.

Therefore for every element $g\in G$, we would have a maximal subgroup $H$ such that $g\in H$.    
\end{proof}

Now let $\rho= (Z,\chi_\rho)$ be a Heisenberg representation of $G$ which is of dimension $d=\sqrt{[G:Z]}.$ Then
 Theorem \ref{Theorem Gall} yields
$$  \det(\rho)(g) = \Delta_H^G(g)\cdot \chi_H(T_{G/H}(g))= \Delta_H^G(g)\cdot \chi_\rho(T_{G/H}(g)),$$
for any $H\supset Z$ such that $H/Z$ is maximal isotropic for $(G/Z, X_\rho)$ and for any character $\chi_H$ 
such that $\chi_H|_Z =\chi_\rho.$ We may assume here that $H\supset Z$ are abelian groups
(see Remark \ref{Remark 2.2}(iv)) and then $T_{G/H}(g)\in H^{G/H}\subseteq Z,$ hence the second equality follows. 
In particular
\begin{equation}\label{eqn 4.1}
  \det(\rho)(g) = \chi_\rho(T_{G/H}(g)),
\end{equation}
if $g\in H$ because $\Delta_H^G = \Delta_{\{1\}}^{G/H}.$\\


\begin{lem}\label{Lemma 4.2}
 Let $\rho=(Z,\chi_\rho)$ be a Heisenberg representation of dimension $d$ of $G$ and 
let $\overline{G}:=G/Ker(\rho).$ Then $\overline{G}$ is two-step nilpotent with center 
$Z(\overline{G})=\overline{Z}\supset [\overline{G},\overline{G}].$ 
And the maximal abelian subgroups $\overline{H}\subset \overline{G}$ correspond to the maximal isotropic subgroups $H/Z$
in $(G/Z, X_\rho).$ Moreover we have:\\
{\bf (i)} Each $g\in \overline{G}$ is contained in some maximal abelian
$\overline{H}\subset \overline{G},$ and $[\overline{G}:\overline{H}]= [G:H]= d.$\\
{\bf (ii)} If $g\in\overline{H}$ as in (i), then
$$ det(\rho)(g) = \chi_\rho(g^d)\cdot \chi_\rho([g,\alpha_{G/H}]) = \chi_\rho(g^d)\cdot X_\rho(g,\alpha_{G/H}).$$
In particular $det(\rho)(g)=\chi_\rho(g^d)$ if $g\in\overline{H}$ such that $\rm{rk}_2(G/H)\ne 1.$\\
{\bf (iii)} We have
$$  det(\rho)(g) =\pm \chi_\rho(g^d) = \varepsilon_\rho(g)\cdot \chi_\rho(g^d)$$
for all $g\in\overline{G},$ and $det(\rho)(g) = \chi_\rho(g^d)$ if $d$ is odd.\\
({\bf Note:} Part (iii) is nothing else than formula (\ref{eqn 1.1}) and part (1) of Theorem \ref{Theorem 1.1}.)

\end{lem}

\begin{proof}


From Remark \ref{Remark 2.2}(ii) (iii), (iv);  the preliminary part of this lemma is clear.\\
{\bf (i).} 
 From Lemma \ref{Lemma 4.1} we see that any $g\in \overline{G}$ must sit in a maximal isotropic 
subgroup $\overline{H}$
and by the preliminary remarks this is equivalent to $\overline{H}$ being abelian.\\
And since $\overline{H}$
corresponds a maximal isotropic subgroup $H$ for $X_\rho$, it can be seen that 
$[\overline{G}:\overline{H}]=[G:H]=d$.

{\bf (ii)/(iii)} If $g\in\overline{H}$ as in (i) then we have seen already that 
$\det(\rho)(g) = \chi_\rho(T_{G/H}(g)).$
And by equation (\ref{eqn 3.1}) we have $T_{G/H}(g)=g^d\cdot [g,\alpha_{G/H}].$ Together this yields
$$  \det(\rho)(g) =\chi_\rho([g,\alpha_{G/H}])\cdot \chi_\rho(g^d)  = \varepsilon_\rho(g)\chi_\rho(g^d)$$
because $[g,\alpha_{G/H}]$ is of order $\le 2,$ hence $\chi_\rho([g,\alpha_{G/H}])$ must be a sign.
And by Theorem \ref{Theorem 2.4}. we know that $\alpha_{G/H}= \overline{1} \in G/H$ if and only if $\rm{rk}_2(G/H)\ne 1.$

\end{proof}

\begin{lem}
The sign function $\varepsilon_\rho:\overline{G} \rightarrow \{\pm 1\}$ is actually a function
on $\overline{G}/\overline{G}^2 \overline{Z} = G/G^2 Z.$\\
({\bf Note:} This lemma is the same as part (2) of Theorem \ref{Theorem 1.1}.)
\end{lem}

\begin{proof}[{\bf Proof of Theorem \ref{Theorem 1.1}(2)}]
Because $Z$ is the scalar group of the irreducible representation $\rho$ of dimension $d$, then by
definition of scalar 
group, elements $z\in Z$ are 
represented by scalar matrices, i.e., 
\begin{equation*}
 \rho(z)=\chi_\rho(z)\cdot I_d, \quad \text{where $I_d$ is the $d\times d$ identity matrix}.
\end{equation*}
This implies 
\begin{center}
 $(\det\rho)(z)=\chi_\rho(z)^{d}=\chi_\rho(z^d)$.
\end{center}
We also know that $Z$ is the radical of $X_\rho$, therefore 
\begin{center}
 $X_\rho(z,g)=\chi_\rho([z,g])=1$ for all $z\in Z$ and $g\in G$.
\end{center}
Moreover, we can consider $\det\rho$ as a linear character of $G$, therefore 
\begin{equation}\label{eqn 2.17}
 (\det\rho)(gz)=(\det\rho)(g)\cdot(\det\rho)(z)=\varepsilon_\rho(g)\chi_\rho(g^d)\chi_\rho(z^d).
\end{equation}
On the other hand 
\begin{equation}\label{eqn 2.18}
 (\det \rho)(gz)=\varepsilon_\rho(gz)\chi_\rho((gz)^d)
 =\varepsilon_\rho(gz)\chi_\rho(g^dz^d[g,z]^{-\frac{d(d-1)}{2}})=\varepsilon_\rho(gz)\chi_\rho(g^d)
 \chi_\rho(z^d).
\end{equation}
On comparing equations (\ref{eqn 2.17}) and (\ref{eqn 2.18}) we get 
\begin{center}
 $\varepsilon_\rho(gz)=\varepsilon_\rho(g)$ for all $g\in G$ and $z\in Z$.
\end{center}
Moreover, since $\varepsilon_\rho(g)$ is a sign, we have
\begin{equation*}
 (\det\rho)(g^2)=(\det\rho)(g)^2=\varepsilon_\rho(g)^2\chi_\rho(g^d)^2=\chi_\rho(g^{2d}).
\end{equation*}
Therefore
\begin{equation}\label{eqn 2.19}
 (\det\rho)(gx^2)=(\det\rho)(g)\cdot(\det\rho)(x^2)=\varepsilon_\rho(g)\chi_\rho(g^d)\chi_\rho(x^{2d}).
\end{equation}
On the other hand
\begin{equation}\label{eqn 2.201}
 (\det\rho)(gx^2)=\varepsilon_\rho(gx^2)\chi_\rho((gx^2)^d)=\varepsilon_\rho(gx^2)\chi_\rho(g^d)\chi_\rho(x^{2d}),
\end{equation}
because $[g,x^2]^{\frac{d(d-1)}{2}} = [g,x^d]^{d-1}$ and $[g, x^d]\in [G,Z_\rho]\subseteq Ker(\rho).$\\
So we see from equations (\ref{eqn 2.19}) and (\ref{eqn 2.201}) $\varepsilon_\rho(gx^2)=\varepsilon_\rho(g)$, hence 
$\varepsilon_\rho$ is a function on $G/G^2Z$.
 
In particular, when $[G:Z]$ is odd, we have $\varepsilon_\rho\equiv 1$ because $[G:Z]$ odd implies
$G=G^2 Z$.\\

\end{proof}

\begin{proof}[{\bf Proof of Theorem \ref{Theorem 1.1}}]{\bf (3)}
We show that $\varepsilon_\rho(g)\equiv 1$ unless $G/G^2 Z$ is Klein's 4-group. We may assume 
that $d$ is even because otherwise $G=G^2 Z.$

Since $G/Z$ is abelian and is provided with nondegenerate alternating character $X_\rho$ we may apply 
Lemma \ref{Lemma 2.8}
to the pair $(G/Z, X_\rho).$\\
Then we see that
$$  G/Z = H/Z \times H'/Z$$
is the product of two isomorphic maximal isotropic subspaces, hence $\rm{rk}_2(G/Z)$ is always even and
$\rm{rk}_2(G/Z)\ne 2$
is the same as
$\rm{rk}_2(G/H') = \rm{rk}_2(G/H) \ne 1.$ Then from Proposition \ref{Proposition 212} we can 
say both $G/H$ and $G/H'$ have at least $3$ elements of order $2$. Then from Theorem \ref{Theorem 2.4} we have 
 $\alpha_{G/H}=1$ and  $\alpha_{G/H'}=1$.
Furthermore, from formula (\ref{eqn 3.1}) we obtain 
\begin{equation*}
 T_{G/H}(h)=h^d\cdot[h,\alpha_{G/H}]=h^d,\quad \text{and}\quad T_{G/H'}(h')=h'^d\cdot[h',\alpha_{G/H'}]=h'^d.
\end{equation*}
Therefore by using equation (\ref{eqn 4.1}) we have 
$$\det(\rho)(h)=\chi_\rho(h^d),\quad\text{and}\quad\det(\rho)(h')=\chi_\rho(h'^d).$$
So we can write 
\begin{align*}
 (\det\rho)(g)
 &=(\det\rho)(h)\cdot(\det\rho)(h'),\quad\text{here $g=h\cdot h'$ is a decomposition of $g$ with $h\in H,$ $h'\in H'$},\\
 &=\chi_\rho(h^d)\cdot\chi_\rho(h'^d)\\ 
 &=\chi_\rho(h^d\cdot h'^d)\\
 &=\chi_\rho((h\cdot h')^d[h,h']^{\frac{d(d-1)}{2}})\quad\text{using Lemma $\ref{Lemma 2.4}(iv)$}\\
 &=\chi_\rho(g^d)\cdot X_\rho(h,h')^{\frac{d(d-1)}{2}}\\
 &=\chi_\rho(g^d),
\end{align*}
because our assumptions $G\ne G^2Z,$ $\rm{rk}_2(G/Z)\ne 2$ and Lemma \ref{Lemma 2.8} applied to $(G/Z, X_\rho)$ imply
$m_1\cdots m_s =d$ with at least two even factors $m_\nu$, hence $m_i|\frac{d}{2}$ for all $i\in\{1,2,...,s\},$ 
and then 
\begin{center}
 $X_\rho(h,h')^{\frac{d(d-1)}{2}}=\chi_\rho([h,h'])^{\frac{d(d-1)}{2}}=\zeta_m^{\frac{d(d-1)}{2}}=1$,
\end{center}
where $\zeta_m$ is a primitive $m$-th root of unity and $m$ is some positive integer (which is the order of $[h,h']$) 
 which divides $\frac{d}{2}$.
This shows that when $\mathrm{rk}_2(G/Z)\ne 2$ we 
have $\varepsilon_\rho\equiv 1$.

{\bf Finally we are left} with the case $\rm{rk}_2(G/Z)=2.$ The Lemma \ref{Lemma 2.8} for $(G/Z,X_\rho)$ means now 
$m_1\cdots m_s =d$ where only $m_s$ is even and all other $m_\nu$ are odd. With the notation of 
Lemma \ref{Lemma 2.8} put
$$ H/Z =\langle t_1\rangle \times\cdots \langle t_s\rangle,\qquad 
H'/Z =\langle t_1'\rangle \times\cdots \langle t_s'\rangle.$$
Then $\rm{rk}_2(G/H)=\rm{rk}_2(G/H')=1$ and
$$ \varepsilon_\rho(\widehat{t_s}) = \chi_\rho([t_s,\alpha_{G/H}])=-1,\qquad \varepsilon_\rho(\widehat{t_s'}) = 
\chi_\rho([t_s',\alpha_{G/H'}])=-1$$

From Lemma \ref{Lemma 4.2}(iii) we see:
$$  1= \frac{(det\;\rho)(g_1)\cdot (det\;\rho)(g_2)}{(det\;\rho)(g_1 g_2)} =
\frac{\varepsilon_\rho(g_1)\chi_\rho(g_1^d)\cdot \varepsilon_\rho(g_2)\chi_\rho(g_2^d)}{\varepsilon_\rho(g_1 g_2)
\chi_\rho((g_1 g_2)^d)}$$
hence
$$ \frac{\varepsilon_\rho(g_1)\varepsilon_\rho(g_2)}{\varepsilon_\rho(g_1g_2)} = 
\frac{\chi_\rho((g_1g_2)^d)}{\chi_\rho(g_1^d g_2^d)} =
X_\rho(g_1,g_2)^{\frac{d(d-1)}{2}} = X_\rho(g_1,g_2)^{\frac{d}{2}},$$
because $d-1$ is odd and the value of $X_\rho^{\frac{d}{2}}$ must be a sign. Thus we conclude now
$$  \varepsilon_\rho(\widehat{t_s}\cdot \widehat{t_s'}) = X_\rho(t_s, t_s')^{\frac{d}{2}} = \zeta_{m_s}^{\frac{d}{2}} =-1$$
which yields the proof for $\rm{rk}_2(G/Z)=2.$

\end{proof}

\begin{rem} If $\rho=(Z,\chi_\rho)$ is a Heisenberg representation and if
$\omega:G/[G,G]\rightarrow\bbC^\times$ is a 1-dimensional character, then
$$  \rho\otimes\omega = (Z,\chi_{\rho\otimes\omega})\quad\textrm{where} \; 
\chi_{\rho\otimes\omega}=\chi_\rho\cdot\omega_Z,$$
where $\omega_Z:=\omega|_{Z}$.\\
Together with the definition of $X_\rho$ which has been given in Lemma \ref{Lemma 4.1}, this shows that:
$\rho'=\rho\otimes\omega$ if and only if $X_{\rho'}=X_\rho$.\\
Moreover if $\rho:G\rightarrow GL_d(\bbC)$ is any representation of dimension $d,$ then
$$  det(\rho\otimes\omega) = det(\rho)\cdot \omega^d.$$
\end{rem}


\begin{prop}
Let $\rho =(Z,\chi_\rho)$ be a Heisenberg representation of $G$ of dimension $d$.
Then the following are equivalent:\\
{\bf (i)}  $\varepsilon_\rho(g)\chi_\rho(g^d)=1$ for all $g\in G$ such that $g^d \in [G,G].$\\
{\bf (ii)} There exists a 1-dimensional character $\omega$ of $G$ such that $det(\rho\otimes\omega)\equiv 1.$
\end{prop}
\begin{proof}
We know already that $\delta(g):= \varepsilon_\rho(g)\chi_\rho(g^d)$ is a character of 
$A:=G/[G,G]$ because it is the determinant of $\rho.$ If now $\delta(g)=1$ in any case where $g^d\in [G,G],$ then
considered as a character of $A$ we have $\delta(a)=1$ always if $a^d =1.$ But $A$ is a finite abelian group thus
by duality this means $\delta = \eta^d$ for some character $\eta$ of $A$. Now put  $\omega= \eta^{-1}$ and consider
it as a character of $G.$ Then it follows that
$$   \det(\rho\otimes\omega)\equiv 1.$$
The implication $(ii) \Rightarrow: (i)$ is obvious because $\det(\rho\otimes\omega) = det(\rho)\cdot\omega^d.$
\end{proof}

\begin{exm}\label{Example 4.7}

 If $\rho=(Z,\chi_\rho)$ is a Heisenberg representation (of dimension $>1$) 
 for a nonabelian group of order $p^3, (p\neq 2,)$ then $Z=[G,G]$
 is cyclic group of order $p$, and $G^p=Z$ or $G^p=\{1\}$ depending on the isomorphism type of $G$. So by using 
 Theorem \ref{Theorem 1.1}, we can observe that  
 $\mathrm{det}(\rho)\not\equiv 1 $ and $\mathrm{det}(\rho)\equiv 1$ depending on the isomorphism type of $G$.



 We know that there are two nonabelian group of order $p^3$, up to isomorphism (for details see \cite{KC}). Now put 
\begin{equation*}
 G_{p}=\left\{\begin{pmatrix}
  1 & a & b  \\
  0 & 1 & c  \\
  0 & 0 & 1
        \end{pmatrix}: a,b,c\in \mathbb{Z}/p\mathbb{Z}\right\}
\end{equation*}
We observe that this $G_p$ is a nonabelian group under matrix multiplication of order $p^3$. We also see that 
$G_{p}^{p}=\{I_3\}$, the identity
in $G_{p}$. Now if $\rho$ is a Heisenberg representation of dimension $\ne 1$ of group $G_p$, 
then we will have $\mathrm{det}(\rho)\equiv 1$.

And when $G$ is an extraspecial group of order $p^3$, where $p\neq 2$ with $G^p=Z$, we will have 
 $\mathrm{Ker}(\Psi)\cong C_p\times C_p$, where $C_p$ is the cyclic group of order $p$.
 Therefore $\mathrm{det}(\rho)(g)=\chi_{Z}(g^p)$. This shows that 
$\mathrm{det}(\rho)\not\equiv 1$.

 Here, we observe that 
for nonabelian group of order $p^3$, where $p$ is prime, 
the determinant of Heisenberg representation of  $G$ gives the information
about the isomorphism type of $G$.
 
\end{exm}

In the following example we see if $G$ is nonabelian 
of order 8, then $det \;\rho\not\equiv 1$ if $G$ is dihedral and $det \;\rho\equiv 1$ if $G$ is quaternion.

\begin{exm}
If $G$ is a nonabelian group of order 8 then it has by obvious reasons precisely one faithful Heisenberg 
representation $\rho=(Z,\chi_\rho)$ which is of dimension 2. Moreover $G^2Z=Z=[G,G]$ is of order 2 and $G/G^2Z$ is
Klein's 4-group. Therefore
$$  (\det\; \rho)(g) = \chi_\rho(g^2)=1\;\textrm{if}\;g\in Z\qquad (\det\; \rho)(g) = 
-\chi_\rho(g^2)\;\textrm{if}\;g\notin Z.$$
Now if $G$ is dihedral we have elements $g\notin Z$ of order 2 (the reflections), hence\\ 
$(\det\;\rho)(g)=-1$, 
whereas $G$ quaternion implies that all $g\notin Z$ are of order 4, hence $(\det\;\rho)(g)=(-1)(-1)=1.$

\end{exm}


\section{{\bf An arithmetic example}}

In this section we consider Heisenberg representations from the arithmetic point of view
(as in \cite{Z2}, pp. 301-302), and we can add now a result on the determinant.

Now we let $F/\bbQ_p$ be a p-adic number field, i.e. a finite extension of the field of rational
p-adic numbers and $G=G_F:=\rm{Gal}(F_{sep}/F)$ the absolute Galois group over $F$. Since $G_F$ is
profinite, a
\begin{center}
 Heisenberg representation $\rho=(Z,\chi_\rho)$ of $G_F$
\end{center}
is actually a representation of a finite factor group $\overline{G_F}=G_F/\rm{Ker}(\rho)=\rm{Gal}(K_\rho/F)$.
Moreover we have: $Z=G_K$ for an abelian extension $K/F$ such that $\rm{Gal}(K_\rho/K)=Z/\rm{Ker}(\chi_\rho)$
is cyclic. Thus $\rho$ can be considered as representation of $G_F/[G_K,G_K]=\rm{Gal}(K_{ab}/F)$ where
$K_{ab}$ denotes the maximal abelian extension of $K$.

Now by the Theorem of Shafarevich-Weil, (see \cite{AT}, p. 246, Theorem 6), the Galois group $\rm{Gal}(K_{ab}/F)$
can be identified as the profinite closure of the relative Weil-group
\begin{equation}\label{eqn 5.1}
 1 \to K^\times \to W_{K/F} \to \rm{Gal}(K/F ) \to 1
\end{equation}
corresponding to the fundamental class $\alpha_{K/F}\in H^2(\rm{Gal}(K/F),K^\times)$. And the reciprocity
map $g\in G_F/[G_F,G_F]\to x_g\in\widetilde{F^\times}$(=profinite completion of $F^\times$) can be recovered as the
transfer map
\begin{equation}\label{eqn 5.2}
T_{K/F} : W_{K/F} /[W_{K/F} , W_{K/F}] \xrightarrow{\sim} {K^\times}^{\rm{Gal}(K/F)}=F^\times,
\end{equation}
where we use $T_{K/F}$ to denote the transfer from $W_{K/F}$ to the abelian subgroup $K^\times$ which
is actually an isomorphism between $W_{K/F}/[W_{K/F}, W_{K/F}]$ and $F^\times$(See \cite{AT}, p.239 where
the notation $T_{K/F}=V_{K/F}$ has been used). Moreover from (\ref{eqn 5.1}) we obtain
\begin{equation}\label{eqn 5.3}
 [W_{K/F} , W_{K/F}] = K_F^\times\subset K^\times,
\end{equation}
where $K_F^\times:=\{y\in K^\times| N_{K/F}(y)=1\}$ denotes the kernel of the norm map. (see
\cite{AT}, p.182, Theorem 3 with G abelian and $\alpha=\alpha_{K/F}$. We will use the notation $N_{K/F}$
also to denote the subgroup of norms: $N_{K/F}=N_{K/F}(K^\times)\subset F^\times$.)\\
Our point view is now to identify $\rho=(Z,\chi_\rho)$ with a representation
\begin{equation}\label{eqn 5.4}
\rho: W_{K/F}\to GL_d(\bbC),\qquad\text{ where $K$ is the fixed point field of $Z\subset G_F$}.
\end{equation}
Then via (\ref{eqn 5.2}) the determinant $\det(\rho)$ identifies with a character 
$\det(\rho):F^\times\to\bbC^\times$. And
the central character $\chi_\rho:Z\to\bbC^\times$ identifies with a character $\chi_K$ of 
$\widetilde{K^\times}\cong Z/[Z,Z]$.
But actually $\chi_\rho$ is a character of $Z/[Z,G_F]$ which translates into 
$\chi_K:K^\times/I_FK^\times\to\bbC^\times$, where
$I_FK^\times\subset K_F^\times$ is the subgroup generated by all $y^{\sigma-1}$, for $y\in K^\times$,
$\sigma\in\rm{Gal}(K/F)$. This is
because $y^{\sigma-1}=[y,\sigma]$ can be interpreted as a commutator in $W_{K/F}$.

Moreover we note that the subgroup $G_F^2Z\subseteq G_F$ corresponds to the field extension $K_2/F$
which comes as the composite of all quadratic subextensions $E/F$ inside $K$. And the corresponding 
$G_F^2Z/[Z,Z]$ identifies with the subgroup $W_{K/K_2}\subseteq W_{K/F}$. This leads us to the
following reformulation of Theorem \ref{Theorem 1.1}:
\begin{prop}\label{Proposition 5.1}
 Let $\rho=(Z,\chi_\rho)=(G_K,\chi_K)$ be a Heisenberg representation of dimension $d$ of the Galois group 
$G_F$ which we may assume to be given via (\ref{eqn 5.4}). Then $\det(\rho)$
understood as a character of $F^\times$ has the invariant form:
\begin{equation}\label{eqn 5.5}
 \det(\rho)(x)=\varepsilon_\rho(x)\cdot \chi_K(w_x^d),\quad \text{for all $x\in F^\times$},
\end{equation}
where $w_x\in W_{K/F}$ is any representative such that $T_{K/F}(w_x)=x$ and where
\begin{equation*}
 \varepsilon_\rho(x)=\begin{cases}
                      -1 & \text{if $[K_2:F]=4$ and $x\not\in N_{K_2/F}$},\\
                      1 & \text{in all other cases}.
                     \end{cases}
\end{equation*}
\end{prop}

\begin{proof}
We use the commutative diagram
\begin{equation*}
  \begin{CD}
  W_{K/F}   @>T_{K/F}>>  F^\times\\
  @VV V                      @VV V\\
  \rm{Gal}(K_{ab}/F) @>>> \rm{Gal}(F_{ab}/F)
 \end{CD}
 \end{equation*}
to shift our main result from the lower row to the upper row.
As to the definition of $\varepsilon_\rho$ we only need to remark that
$$G_F/G_F^2Z\cong W_{K/F}/W_{K/K_2}
\stackrel{\rm{T_{K/F}}}{\longleftrightarrow}
F^\times/N_{K_2/F},$$
where the second isomorphism is due to $N_{K_2/F}\circ T_{K/K_2}=T_{K/F}$ which can easily be seen
from: \cite{AT} , pp. 242-243, Theorem 4, (where the map $f$ is inverse to our isomorphism (\ref{eqn 5.2})).

Since $G_F/Z=\rm{Gal}(K/F)$ has the property $\sigma^{d}=1$ for all $\sigma\in\rm{Gal}(K/F)$, we deduce from
(\ref{eqn 5.1}) that $w_x^d\in K^{\times}$.
And from (\ref{eqn 5.2}), (\ref{eqn 5.3}) we see that any other $w_x'$ representing $x$ has the form 
$w_x'=w_xy$ for some $y\in K_F^\times$. Now modulo $I_FK^\times$ the two factors commute, hence
$${w_x'}^d\equiv w_x^dy^d \pmod{I_FK^\times},\quad \chi_K({w_x'}^d)=\chi_K(w_x^d)\chi_K(y^d).$$
So to recover the fact that $\chi_K(w_x^d)$ is well defined it will be enough to verify that $y^d\in I_FK^\times$,
if $y\in K_F^\times$. Here we use that $\overline{W_{K/F}}:=W_{K/F}/I_FK^\times$ is 2-step nilpotent because it
corresponds to $G_F/[Z,G_F]$ (see Remark \ref{Remark 2.2}(iii)). Therefore the commutator in
$\overline{W_{K/F}}$ induces an alternating bilinear form
\begin{equation}\label{eqn 5.6}
 c_{K/F}:F^\times\wedge F^\times\twoheadrightarrow K_F^\times/I_FK^\times,\quad
 c_{K/F}(x_1,x_2)=[w_{x_1},w_{x_2}],
\end{equation}
where now the representatives $w_{x_i} (i=1,2)$ and the commutator have to be considered in $\overline{W_{K/F}}$.
But the transfer map (\ref{eqn 5.2}) restricted to $K^\times\subset W_{K/F}$ is nothing else than the norm map:
$T_{K/F}(y)=N_{K/F}(y)$, and $\overline{K^\times}:=K^\times/I_FK^\times$ is precisely the center in 
$\overline{W_{K/F}}$. Therefore:
\begin{lem}
The alternating bilinear form $c_{K/F}$ has the radical $N_{K/F}\subset F^\times$ and induces
an isomorphism
\begin{equation}\label{eqn 5.7}
 \rm{Gal}(K/F)\wedge\rm{Gal}(K/F)\cong F^\times/N_{K/F}\wedge F^\times/N_{K/F}\xrightarrow{c_{K/F}}K_F^\times/I_FK^\times.
\end{equation}
\end{lem}
From what has been said so far it becomes obvious that $c_{K/F}$ is a surjective homomorphism.
But one may verify that (\ref{eqn 5.7}) is nothing else than a reformulation of the Tate
isomorphism 
$$\widehat{H}^{-3}(\rm{Gal}(K/F),\bbZ)\xrightarrow{\cup_{\alpha_{K/F}}}\widehat{H}^{-1}(\rm{Gal}(K/F),K^\times)$$
for abelian extensions $K/F$.

Thus we see that together with $\rm{Gal}(K/F)$ also $K_F^\times/I_FK^\times$ is of exponent $d$, hence
$x\in F^\times\mapsto\chi_K(w_x^d)$ is a well defined map, but, as we have seen, need not be a homomorphism.

\end{proof}
{\bf Remark 1:} If $x=N_{K/F}(y)$ is a norm then (as we have seen already) we may take
$w_x=y\in K^\times\subset W_{K/F}$. Moreover $N_{K/F}\subseteq N_{K_2/F}$, hence $\varepsilon_\rho(x)=1$,
and therefore the Proposition \ref{Proposition 5.1} implies:
$$\det(\rho)(N_{K/F}(y))=\chi_K^d(y)\quad\text{for all $y\in K^\times$}.$$
In group theoretic terms this means $\det(\rho)(z)=\chi_\rho^d(z)$ if $\rho=(Z,\chi_\rho)$ and 
$z\in Z$.

{\bf Remark 2:} If we actually want to compute $\chi_K(w_x^d)$ then it will be necessary to make
choices again. For this we consider
$$X:=\chi_K\circ c_{K/F}:F^\times/N_{K/F}\wedge F^\times/N_{K/F}\to \bbC^\times,$$
which again is non-degenerate and in algebraic terms corresponds to
$$\chi_\rho\circ [.,.]:G_F/Z\wedge G_F/Z\to\bbC^\times.$$
Now given $x\in F^\times$, we use Lemma \ref{Lemma 4.1} and find
$x\in N_{L/F}\subset F^\times$, for some subgroup of norms $N_{L/F}\subset F^\times$ which is maximal isotropic for
$X$. Let now $y_L\in L^\times$ such that
$$N_{L/F}(y_L)=x.$$
Using the commutative diagram
\begin{equation*}
  \begin{CD}
  W_{K/L}   @>\subset>>  W_{K/F}\\
  @VV T_{K/L}V                      @VV T_{K/F}V\\
  L^\times @>N_{L/F}>> F^\times
 \end{CD}
 \end{equation*}
with vertical transfer maps, we see that
\begin{equation}
 w_x:=T_{K/L}^{-1}(y_L)\in W_{K/L}\subset W_{K/F}\tag{*}
\end{equation}
is a representative for x because:
$$T_{K/F}(w_x)=N_{L/F}\circ T_{K/L}(w_x)=N_{L/F}(y_L)=x.$$
But $L^\times/N_{K/L}\cong\rm{Gal}(K/L)$ and $[K : L] = [L : F ] = d$ implies $y_L^d\in N_{K/L}$ hence
$$y_L^d=N_{K/L}(y_K)=T_{K/L}(y_K)$$ and
$$\chi_K(w_x^d)=_{(*)}\chi_K(T_{K/L}^{-1}(y_L^d))=\chi_K(N_{K/L}^{-1}(y_L^d))=\chi_K(N_{K/L}^{-1}((N_{L/F}^{-1}(x))^d).$$

{\bf Remark 3:} From \cite{AT}, it becomes obvious that Proposition \ref{Proposition 5.1}
generalizes to all situations where the base field $F$ is member of a class formation (cf. \cite{AT}
p.209), in particular if $F$ is a global field. Now we have to replace $F^\times$ by the formation
module $A_F$ which is assigned to $F$; for global fields $F$ it is $A_F=I_F/F^\times$ the idele class
group. Then we obtain the norm-residue-map of class field theory as a canonical map $A_F\to \rm{Gal}(F_{ab}/F)$,
and (\ref{eqn 5.1})-(\ref{eqn 5.3}) reformulate as
\begin{equation}
1\to A_K\to W_{K/F}\to \rm{Gal}(K/F)\to 1,\qquad\text{assigned to $\alpha_{K/F}\in H^2(\rm{Gal}(K/F),A_K)$,}\tag{{\bf 1A}} 
\end{equation}
\begin{equation}
 T_{K/F}: W_{K/F}/[W_{K/F},W_{K/F}]\xrightarrow{\sim} A_K^{\rm{Gal}(K/F)}=A_F, \tag{{\bf 2A}}
\end{equation}
\begin{equation}
 [W_{K/F},W_{K/F}]=A_{N_{K/F}}\subset A_K,\tag{{\bf 3A}}
\end{equation}
where $A_{N_{K/F}}=\{y\in A_K| N_{K/F}(y)=1\}$ is the kernel of the norm map $N_{K/F}:A_K\to A_F$.\\
Now we will use the notation $N_{K/F}$ also to denote the subgroup of norms $N_{K/F}:=N_{K/F}(A_K)\subset A_F$.

A Heisenberg representation $\rho=(Z,\chi_\rho)$ of the Galois group $G_F$ rewrites now as
$$\rho=(G_K,\chi_K),$$
where $K$ is the fix point field of $Z$ and $\chi_K:A_K/I_FA_K\to \bbC^\times$ is a Galois invariant character
of $A_K$ which comes as the lift of $\chi_\rho:Z/[Z,G_F]\to\bbC^\times$. Again we may interpret $\rho$ as a 
representation of $W_{K/F}$ or more precisely of the 2-step-nilpotent group 
$\overline{W_{K/F}}:=W_{K/F}/I_FA_K$
and via $A_F\to G_F/[G_F,G_F]$ we may interpret $\det(\rho)$ as a character of $A_F$. Then we obtain:\\
{\bf Proposition 5.1.A} {\it  Let $\rho=(Z,\chi_\rho)=(G_K,\chi_K)$ be a Heisenberg representation of dimension $d$
of $G_F$, interpreted as
$$\rho: W_{K/F}\to GL_d(\bbC).$$
The character $\det(\rho):A_F\to C^\times$ is then given in invariant form as
$$\det(\rho)(x)=\varepsilon_\rho(x)\cdot\chi_K(w_x^d),$$
where again $w_x\in W_{K/F}$ is a representative such that $T_{K/F}(w_x)=x\in A_F$ and where $\varepsilon_\rho(x)$
is defined as before but with the convention $N_{K_2/F}=N_{K_2/F}(A_{K_2})\subset A_F$.
}
 
\vspace{1cm}
\newpage
\textbf{Acknowledgements.} I would like to thank Prof E.-W. Zink for suggesting this problem and his constant 
valuable advices. I also thank to my adviser Prof. Rajat Tandon for his continuous encouragement. I
extend my gratitude to Prof. Elmar Grosse-Kl\"{o}nne for providing very good mathematical environment during stay in
Berlin. I am also grateful to Berlin Mathematical School for their financial support.

\vspace{1cm}


\end{document}